\documentclass[11pt,a4paper,english,reqno]{amsart}
\usepackage{ae,aecompl}
\usepackage[T1]{fontenc}
\usepackage[latin9]{inputenc}
\usepackage{color}
\usepackage{babel}
\usepackage{amsbsy}
\usepackage{amstext}
\usepackage{amsthm}
\usepackage{amssymb}
\usepackage{setspace}
\usepackage[bookmarks=true,bookmarksnumbered=false,bookmarksopen=false,
 breaklinks=false,pdfborder={0 0 1},backref=false,colorlinks=true]
 {hyperref}
\hypersetup{
 allcolors=blue,pdfstartview=FitH}

\makeatletter

\pdfpageheight\paperheight
\pdfpagewidth\paperwidth

\numberwithin{equation}{section}
\theoremstyle{plain}
\newtheorem{thm}{\protect\theoremname}[section]
\theoremstyle{plain}
\newtheorem{lem}[thm]{\protect\lemmaname}
\theoremstyle{plain}
\newtheorem{lyxalgorithm}[thm]{\protect\algorithmname}
\theoremstyle{plain}
\newtheorem{cor}[thm]{\protect\corollaryname}

\usepackage{enumitem}
\setlist{leftmargin=*,font=\normalfont}
\makeatother

\providecommand{\algorithmname}{Algorithm}
\providecommand{\corollaryname}{Corollary}
\providecommand{\lemmaname}{Lemma}
\providecommand{\theoremname}{Theorem}

\begin{document}
\title{On the noninjectivity of Jordan's totient functions}
\author{Hang Fu}
\address{Department of Mathematics and Computer Science, University of Basel,
Spiegelgasse 1, 4051 Basel, Switzerland}
\email{drfuhang@gmail.com}
\urladdr{https://sites.google.com/view/hangfu}
\date{\today}
\begin{abstract}
For any positive integer $k$, let $J_{k}$ be Jordan's totient function.
We show that $J_{k}$ is not injective when $k=3,4,6$.
\end{abstract}

\subjclass[2020]{11Y70}
\keywords{Jordan's totient function}
\thanks{The author acknowledges support by the Swiss National Science Foundation
Grant ``Rational points, arithmetic dynamics, and heights'' no. 219397.}
\maketitle

\section{Introduction}

For any positive integer $k$, let
\begin{equation}
J_{k}(n)=n^{k}\prod_{\substack{p\mid n\\
p\text{ prime}
}
}(1-p^{-k})=\left(\frac{n}{\text{rad}(n)}\right)^{k}\prod_{\substack{p\mid n\\
p\text{ prime}
}
}(p^{k}-1)\label{eq:1.1}
\end{equation}
be Jordan's totient function, where
\[
\text{rad}(n)=\prod_{\substack{p\mid n\\
p\text{ prime}
}
}p
\]
is the radical of $n$. This is a generalization of Euler's totient
function, which is the same as $J_{1}(n)$.

In this article, we study the noninjectivity of $J_{k}$ by solving
the equation
\begin{equation}
J_{k}(m)=J_{k}(n).\label{eq:1.2}
\end{equation}
By a solution of \eqref{eq:1.2}, we mean an unordered pair $\{m,n\}$
of distinct positive integers satisfying \eqref{eq:1.2}. Such a solution
is called primitive if $\{m/r,n/r\}$ is not a solution for any integer
$r>1$.

Clearly, $J_{1}$ and $J_{2}$ are not injective since $J_{1}(1)=J_{1}(2)$
and $J_{2}(5)=J_{2}(6)$. In \cite{MR3536148}, Bogomolov and the
author found that $J_{3}$ is not injective by giving the solution
\[
J_{3}(2^{2}\cdot37\cdot191)=J_{3}(28268)=J_{3}(28710)=J_{3}(2\cdot3^{2}\cdot5\cdot11\cdot29).
\]
This is the smallest solution of $J_{3}(m)=J_{3}(n)$ in the sense
that no solution has a smaller value of either $\max\{m,n\}$ or $\min\{m,n\}$.
However, we will see that there is a solution with a smaller value
of $P^{+}(mn)$, the largest prime factor of $mn$.

Motivated by this result, we study the same problem for other $k$.
The main theorem of this article is as follows.
\begin{thm}
\label{thm:1.1} When $k=3,4,6$, Jordan's totient function $J_{k}$
is not injective. Moreover:
\begin{enumerate}
\item For $k=3$, we have
\[
J_{3}(2\cdot29\cdot37\cdot67\cdot163)=J_{3}(2^{2}\cdot3^{3}\cdot7^{2}\cdot67^{2}).
\]
This is the only primitive solution such that $P^{+}(mn)\leq163$.
\item For $k=4$, we have
\begin{align*}
J_{4}(3^{2}\cdot13\cdot23\cdot41\cdot73\cdot193\cdot239) & =J_{4}(2^{3}\cdot5\cdot17\cdot31\cdot83\cdot307\cdot701),\\
J_{4}(3\cdot5^{2}\cdot23\cdot41\cdot73\cdot239\cdot701) & =J_{4}(2\cdot13^{2}\cdot17\cdot31\cdot83\cdot193\cdot307).
\end{align*}
These are the only primitive solutions such that $P^{+}(mn)\leq701$.
\item Any solution of $J_{6}(m)=J_{6}(n)$ satisfies $P^{+}(mn)>7.4\cdot10^{6}$.
\item There is a solution of $J_{6}(m)=J_{6}(n)$ with $P^{+}(mn)<5.2\cdot10^{9}$.
\end{enumerate}
\end{thm}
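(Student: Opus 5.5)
The plan splits Theorem~\ref{thm:1.1} into two parts. The explicit identities in (1), (2) and (4) are proved by direct verification. The uniqueness claims in (1), (2) and the lower bound in (3) need an exhaustive, computer-assisted search organised by the largest prime factor. For the identities, I will use \eqref{eq:1.1}: for each side I write $J_k(n)=(n/\mathrm{rad}(n))^k\prod_{p\mid n}(p^k-1)$, factor every $p^k-1$ via its cyclotomic factors $\Phi_d(p)$ with $d\mid k$, and compare the two prime factorizations. For instance, in (1) the factor $67$ appears to the first power on the left and squared on the right. The extra $67^3$ then balances against the primes dividing $\Phi_3(p)=p^2+p+1$ and $p-1$ on the left, and similarly for the other primes. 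Primitivity is checked by noting that no common $r>1$ divides both arguments in a way that preserves the equality; since $\gcd(m,n)$ is small, only finitely many $r$ need checking.

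For the ``only primitive solution'' and lower-bound claims, I will use a descent on primes. Suppose $J_k(m)=J_k(n)$ with $P^+(mn)\le B$. Let $q$ be the largest prime dividing $J_k(m)$. Either $q\mid mn$, or $q\mid p^k-1$ for some $p\mid mn$, which forces $p\ge q^{1/\varphi(d)}$ for some $d\mid k$. The key observation is a balancing principle. Every prime $\ell\le B$ that divides $mn$ or some $\Phi_d(p)$ (for $d\mid k$ and $p\mid mn$) must have matching $\ell$-adic valuation on both sides. Hence a prime $p$ can appear in $mn$ only if every ``large'' prime factor of $\prod_{d\mid k}\Phi_d(p)$ is itself covered by another prime of $mn$ or by another $\Phi_d(p')$. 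I will encode this as a graph or hypergraph on primes up to $B$. Each prime $p$ has outgoing edges to the primes greater than some threshold dividing $J_k(p)$. A prime is admissible only if all such large divisors are covered, either by being in $mn$ or by dividing $J_k(p')$ for another admissible $p'$. I then iterate the pruning: I discard primes $p$ for which some prime factor $\ell>B$ of $J_k(p)$ cannot be matched. Such an $\ell$ can only divide $\Phi_d(p')$ for $p'\le B$, so the matching condition is computable. The pruning is repeated until the set stabilises.

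On the surviving set of primes, the equation becomes a linear system over $\mathbb{Z}$. Writing $m=\prod p^{a_p}$ and $n=\prod p^{b_p}$ with exponents $a_p,b_p\ge 0$ and indicator variables for which primes occur, I compare $\ell$-adic valuations for each prime $\ell$. This gives linear equations in the exponents (nonlinear only through the $0/1$ indicators). I will enumerate the supports, solve the resulting lattice problem, and reduce modulo the primitive equivalence. For $k=3$ with $B=163$ and $k=4$ with $B=701$, this should return exactly the listed solutions. For $k=6$ up to $7.4\cdot10^6$, it should return none. For (4), I will present the explicit pair found by extending the search, and verify it as in the first step.

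The main obstacle is step two for $k=6$. There $\Phi_3(p)\Phi_6(p)(p^2-1)$ has size about $p^6$, so its large prime factors are rarely matched. This makes pruning powerful, but the raw work of factoring $p^2\pm p+1$ for all $p\le7.4\cdot10^6$ and iterating the coverage condition is heavy. Proving that the pruning actually empties the set, rather than leaving a small residual system, is where I expect the real difficulty. If residual primes survive, I would first try adding $\ell$-adic constraints for small $\ell$ (such as $2$ and $3$) to kill the remaining configurations.
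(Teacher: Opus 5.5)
\textbf{Main gap: part (4).} The non-injectivity of $J_6$ rests entirely on part (4), and your plan there does not work. You propose to ``present the explicit pair found by extending the search''. Nothing in your method guarantees that such a search finds a solution below $5.2\cdot10^{9}$, and at that scale the search cannot be run. At $C=2.65\cdot10^{9}$, pruning still leaves about $2.75\cdot10^{6}$ candidate primes, involving about $1.6\cdot10^{6}$ primes $\ell$. Finding a nonzero $\boldsymbol{x}\in\{-1,0,1\}^{\mathcal{C}}$ with $M\boldsymbol{x}\equiv\boldsymbol{0}\bmod 6$ by enumeration is hopeless, and the paper itself does not, and says it cannot, exhibit an explicit solution.

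The missing idea is a nonconstructive counting argument:
\begin{enumerate}
\item Start the pruning from $\mathcal{A}\setminus\mathcal{B}$. The surviving set $\mathcal{C}$ is then disjoint from the set $\mathcal{D}$ of primes dividing the $p^6-1$, so \eqref{eq:2.4} holds automatically for every $S,T\subseteq\mathcal{C}$. This matters because pigeonhole gives no control over which $S,T$ arise.
\item Form the graph on $\mathcal{C}$ whose edges are the pairs $\mathcal{C}_\ell$ with $\#\mathcal{C}_\ell=2$. Let $\mathcal{F}$ be the sets that pick one colour class in each bipartite component. Then $\#\mathcal{F}\ge 2^{\#\mathcal{C}-\#\mathcal{D}_2}$, and every $\mathcal{D}_2$-coordinate of $\Phi$ is constant on $\mathcal{F}$.
\item Since $v_\ell(p^6-1)=1$ whenever $\#\mathcal{C}_\ell\le 4$, the remaining coordinates take at most $4^{\#\mathcal{D}_3}5^{\#\mathcal{D}_4}6^{\#\mathcal{D}_5}$ values. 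The computed counts make this smaller than $\#\mathcal{F}$.
\item A collision $\Phi(S')=\Phi(T')$ gives $S=S'\setminus T'$ and $T=T'\setminus S'$, hence a solution via Lemma~\ref{lem:2.3}.
\end{enumerate}
The naive comparison of $2^{\#\mathcal{C}}$ with $6^{\#\mathcal{D}}$ fails by a wide margin, so the bipartite-component refinement is the real content.

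\textbf{Secondary gaps: the search for (1)--(3).} Your search lacks the structure that makes it correct and feasible.

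First, your exponent system in $a_p,b_p$ has infinitely many solutions per support, since any common factor supported on the shared primes can be inserted. You give no argument that each family has exactly one primitive member, which the ``only primitive solution(s)'' claims require. Lemma~\ref{lem:2.3} supplies this. Common primes cancel, and everything depends only on the sets $S,T$ of primes dividing exactly one of $m,n$, through \eqref{eq:2.2}. Each $(S,T)$ satisfying \eqref{eq:2.4} yields exactly one primitive solution, namely \eqref{eq:2.6}. So there is no need to enumerate the shared primes, and the problem becomes $M\boldsymbol{x}\equiv\boldsymbol{0}\bmod k$ with $\boldsymbol{x}\in\{-1,0,1\}^{\mathcal{A}}$.

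Second, for $k=6$ your hope that pruning empties the set is false: $3460$ primes survive at $C=7.4\cdot10^{6}$. Enumerating supports is therefore impossible. The paper instead proceeds as follows:
\begin{enumerate}
\item It computes $\ker_{\mathbb{Z}/2\mathbb{Z}}M$ and $\ker_{\mathbb{Z}/3\mathbb{Z}}M$, each of dimension $17$.
\item It runs over the $2^{17}-1$ nonzero mod-$2$ kernel vectors $\boldsymbol{y}$.
\item For each, it checks that the mod-$3$ basis restricted to the zero set of $\boldsymbol{y}$ is never linearly dependent.
\end{enumerate}
This is a necessary condition for a lift, because a ternary $\boldsymbol{x}$ has the same support mod $2$ and mod $3$. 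For $k=4$, an analogous mod-$2$ to mod-$4$ lifting is used. Your fallback of adding $\ell$-adic constraints for small $\ell$ does not replace this step.
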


The proof relies on computer calculations. The bounds in parts (3)
and (4) could be slightly improved by using more computational resources
or by analyzing the computational results more carefully. However,
such further effort would not help us find an explicit solution, so
we leave the theorem as stated. All computations were carried out
using \texttt{Mathematica}. The code is available with the \texttt{arXiv}
version of this paper.

In Section \ref{sec:2}, we develop the lemmas and algorithms used
to find primitive solutions. In Section \ref{sec:3}, we briefly discuss
the well-known cases $k=1,2$. The purpose of this section is to illustrate
the theoretical difficulties and to justify why a computational approach
is used. Readers interested primarily in the proof of Theorem \ref{thm:1.1}
may skip this section. In Section \ref{sec:4}, we prove Theorem \ref{thm:1.1}
using computer calculations and a counting argument.

Throughout the paper, the letters $p,q,\ell$ denote primes.

\section{\label{sec:2} Some lemmas and algorithms}

Assume that $\{m,n\}$ is a solution of \eqref{eq:1.2}. Let
\begin{equation}
S=\{p:p\mid m,p\nmid n\}\qquad\text{and}\qquad T=\{p:p\mid n,p\nmid m\}.\label{eq:2.1}
\end{equation}
Then we have $S\cap T=\varnothing$, $S\cup T\neq\varnothing$, and
\begin{equation}
{\textstyle \prod_{p\in S}}(p^{k}-1)/{\textstyle \prod_{p\in T}}(p^{k}-1)\in(\mathbb{Q}^{\times})^{k}.\label{eq:2.2}
\end{equation}

Given a positive integer $C$, in order to find all primitive solutions
of \eqref{eq:1.2} with $P^{+}(mn)\leq C$, we first determine all
$S,T$ satisfying $\max(S\cup T)\leq C$ and \eqref{eq:2.2}. The
following lemma and algorithm simplify this computation.
\begin{lem}
\label{lem:2.1} Let $k=3,4,6$, let $S,T$ be sets of primes such
that $S\cap T=\varnothing$, $S\cup T\neq\varnothing$, and \eqref{eq:2.2}
holds, and let $C\geq\max(S\cup T)$ be an integer. Then for any $p\in S\cup T$,
we have
\[
P^{+}(p^{k}-1)<2C.
\]
\end{lem}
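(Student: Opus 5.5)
The plan is a $q$-adic valuation argument combined with the structure of the $k$-th roots of unity modulo $q$. Suppose, for contradiction, that some $p_0\in S\cup T$ has a prime divisor $q\mid p_0^{k}-1$ with $q\geq 2C$. Let $v_q$ denote the $q$-adic valuation. Condition \eqref{eq:2.2} says that
\[
\sum_{p\in S}v_q(p^{k}-1)-\sum_{p\in T}v_q(p^{k}-1)\equiv 0 \pmod k .
\]
Every $p\in S\cup T$ satisfies $p\leq C<q$, so $p^{k}-1<q^{k}$ and hence $0\leq v_q(p^{k}-1)<k$. Consequently, if $p_0$ were the only element of $S\cup T$ whose $v_q$ is positive, the sum above would lie strictly between $-k$ and $k$ and be nonzero, which is a contradiction. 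So there is a second prime $p_1\in S\cup T$, with $p_1\neq p_0$ and $q\mid p_1^{k}-1$.

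Next I locate the residues of $p_0$ and $p_1$ modulo $q$. We have $x^{k}-1=\prod_{d\mid k}\Phi_d(x)$. Since $2\leq p_i\leq C<q$, the prime $q$ cannot divide $p_i-1$. It also cannot divide $p_i+1\leq C+1<2C\leq q$ (for $C\geq2$). Hence $q$ divides $\Phi_d(p_i)$ for some $d\in\{3,4,6\}$ with $d\mid k$, and each such $\Phi_d$ has degree $2$. If $r$ is a root of $\Phi_3$ mod $q$, the roots are $\{r,r^{2}\}$ for $\Phi_3$, $\{-r,-r^{2}\}$ for $\Phi_6$, and $\{s,-s\}$ for $\Phi_4$. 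Since $p_0\neq p_1$ are both in $[2,C]$ with $C<q/2$, they are distinct residues, and I split into cases.
\begin{itemize}
\item For $k=3$, we get $p_0+p_1\equiv -1\pmod q$. But $3\leq p_0+p_1+1\leq 2C<q$, a contradiction.
\item For $k=4$, we get $p_0+p_1\equiv 0\pmod q$, while $0<p_0+p_1<q$, again a contradiction.
\item For $k=6$, the pairs within $\Phi_3$, within $\Phi_6$, or of the form $\{r,-r\}$ or $\{r^{2},-r^{2}\}$ give $p_0+p_1\equiv -1,1,0\pmod q$. Each is impossible, since $4\leq p_0+p_1\leq 2C-1<q-1$. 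The remaining mixed pairs, $\{r,-r^{2}\}$ and $\{r^{2},-r\}$, use $r^{2}=-r-1$ and give $p_0-p_1\equiv\pm1\pmod q$. Since $|p_0-p_1|<q$, this forces $|p_0-p_1|=1$, i.e. $\{p_0,p_1\}=\{2,3\}$.
\end{itemize}

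The step I expect to be the genuine obstacle is this exceptional configuration for $k=6$. Here $C=3$ is allowed and $q=7$ divides both $2^{6}-1=3^{2}\cdot7$ and $3^{6}-1=2^{3}\cdot7\cdot13$, so the local argument above does not rule it out. I would dispose of it by a direct check using \eqref{eq:2.2} for the other prime $13$. When $C\leq3$ we have $S\cup T\subseteq\{2,3\}$. If $3\in S\cup T$, the total $13$-adic valuation is exactly $v_{13}(3^{6}-1)=1$, which is not divisible by $6$, so \eqref{eq:2.2} fails. Hence $S\cup T=\{2\}$, and then the second prime $p_1$ required above does not exist. This exhausts all cases and proves $P^{+}(p^{k}-1)<2C$ for every $p\in S\cup T$.
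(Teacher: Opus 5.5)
Your proof is correct and is essentially the paper's argument. Your Vieta relations on the quadratic cyclotomic factors are exactly the paper's explicit list of the other $k$-th roots of unity modulo $\ell$ (namely $\ell-p-1$, $\ell-p$, $p\pm1$); the exceptional pair $\{2,3\}$ with $\ell=7$ is eliminated via the prime $13$ just as in the paper; and your bound $v_q(p^k-1)<k$ plays the role of the paper's $v_\ell(p^k-1)=1$ (arguing with a second prime $p_1$ also lets you skip the paper's special case $p=C$, $\ell=2C+1$). Two points are left implicit: $q\neq 2C$ because $2C\geq4$ is even, which you need for the strict inequality $2C<q$; and in the exceptional case $q$ must equal $7$, the only common prime factor of $2^6-1$ and $3^6-1$, which is what forces $C\leq3$.
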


\begin{proof}
Suppose that there exist $p\in S\cup T$ and $\ell>2C$ such that
$\ell\mid p^{k}-1$. We claim that there does not exist another $q\in S\cup T$
such that $\ell\mid q^{k}-1$. Consider the roots of $x^{k}\equiv1\bmod\ell$.

Case 1: $k=3$. Since $\ell\mid p^{3}-1$ and $\ell>p-1$, we have
$\ell\mid p^{2}+p+1$. The equation $x^{3}\equiv1\bmod\ell$ has three
distinct roots
\[
1<p<\ell-p-1
\]
unless $p=C$ and $\ell=2C+1$. But then we have
\[
C(C-1)\equiv C^{2}+C+1\equiv0\bmod(2C+1),
\]
a contradiction. We note that $\ell-p-1>C$.

Case 2: $k=4$. Since $\ell\mid p^{4}-1$ and $\ell>p+1$, we have
$\ell\mid p^{2}+1$. The equation $x^{4}\equiv1\bmod\ell$ has four
distinct roots
\[
1<p<\ell-p<\ell-1.
\]
We note that $\ell-p>C$.

Case 3: $k=6$. Since $\ell\mid p^{6}-1$ and $\ell>p+1$, we have
$\ell\mid p^{2}+p+1$ or $\ell\mid p^{2}-p+1$.

(1) If $\ell\mid p^{2}+p+1$, then $x^{6}\equiv1\bmod\ell$ has six
distinct roots
\[
1<p<p+1<\ell-p-1<\ell-p<\ell-1
\]
unless $p=C$ and $\ell=2C+1$. But then we get the same contradiction
as in Case 1. We note that $\ell-p-1>C$. If $p+1\in S\cup T$, then
we have $p=2$, $C=3$, $\ell=7$, and $S\cup T=\{2,3\}$. But from
\[
2^{6}-1=3^{2}\cdot7\qquad\text{and}\qquad3^{6}-1=2^{3}\cdot7\cdot13,
\]
we cannot get \eqref{eq:2.2}, a contradiction.

(2) If $\ell\mid p^{2}-p+1$, then $x^{6}\equiv1\bmod\ell$ has six
distinct roots
\[
1<p-1<p<\ell-p<\ell-p+1<\ell-1
\]
unless $p=2$ and $\ell=3$. But then we have $\ell<2C$, a contradiction.
We note that $\ell-p>C$. If $p-1\in S\cup T$, then we have $p=3$,
$C=3$, $\ell=7$, and $S\cup T=\{2,3\}$. But then we get the same
contradiction as in Case 3 (1).

Finally, we note that since $\ell^{2}>p^{2}+p+1$, we have $v_{\ell}(p^{k}-1)=1$.
The claim implies that $v_{\ell}(q^{k}-1)=0$ for any other $q\in S\cup T$.
Therefore, \eqref{eq:2.2} cannot hold.
\end{proof}
\begin{lyxalgorithm}
\label{alg:2.2} Let $k\geq1$, let $C$ be a positive integer, and
let
\[
\mathcal{A}_{0}=\begin{cases}
\{p:p\leq C\text{ and }P^{+}(p^{k}-1)<2C\}, & \text{if }k=3,4,6,\\
\{p:p\leq C\}, & \text{otherwise}.
\end{cases}
\]
Then we do the following operation recursively: let
\begin{align*}
\mathcal{A}_{n+1}= & \;\mathcal{A}_{n}\backslash\{p\in\mathcal{A}_{n}:\text{there exists }\ell\text{ such that }v_{\ell}(p^{k}-1)>0,\\
 & \;k\nmid v_{\ell}(p^{k}-1),\text{and }v_{\ell}(q^{k}-1)=0\text{ for any }q\in\mathcal{A}_{n}\backslash\{p\}\}.
\end{align*}
The sequence $\mathcal{A}_{n}$ will stabilize after finitely many
steps.
\end{lyxalgorithm}

If $S,T$ satisfy $\max(S\cup T)\leq C$ and \eqref{eq:2.2}, then
we have $S\cup T\subseteq\mathcal{A}_{n}$ for any $n\geq0$.
\begin{lem}
\label{lem:2.3} Let $k\ge1$, let $S,T$ be sets of primes such that
$S\cap T=\varnothing$, $S\cup T\neq\varnothing$, and \eqref{eq:2.2}
holds, and let $a,b$ be the unique coprime positive integers such
that
\begin{equation}
a^{k}\prod_{p\in S}(p^{k}-1)=b^{k}\prod_{p\in T}(p^{k}-1).\label{eq:2.3}
\end{equation}
Then $S,T$ arise from a solution of \eqref{eq:1.2} if and only if
\begin{equation}
S\cap\{p:p\mid b\}=T\cap\{p:p\mid a\}=\varnothing.\label{eq:2.4}
\end{equation}
If \eqref{eq:2.4} holds, let
\begin{equation}
R=\{p:p\mid ab\}\backslash(S\cup T)\label{eq:2.5}
\end{equation}
and
\begin{equation}
m=a\prod_{p\in S\sqcup R}p\qquad\text{and}\qquad n=b\prod_{p\in T\sqcup R}p.\label{eq:2.6}
\end{equation}
Then $\{m,n\}$ is the unique primitive solution of \eqref{eq:1.2}
with associated sets $S,T$.
\end{lem}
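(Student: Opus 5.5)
We start from the general shape of a solution. Suppose $\{m,n\}$ is any solution with associated sets $S,T$ as in \eqref{eq:2.1}. Let $U=\{p:p\mid m,\ p\mid n\}$, and write
\[
m=\prod_{p\mid m}p^{e_p},\qquad n=\prod_{p\mid n}p^{f_p}.
\]
By \eqref{eq:1.1}, the equation \eqref{eq:1.2} becomes
\[
\Big(\prod_{p\mid m}p^{e_p-1}\Big)^{k}\prod_{p\in S}(p^k-1)=\Big(\prod_{p\mid n}p^{f_p-1}\Big)^{k}\prod_{p\in T}(p^k-1),
\]
since the factors $p^k-1$ for $p\in U$ cancel. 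Put $A=\prod_{p\mid m}p^{e_p-1}$ and $B=\prod_{p\mid n}p^{f_p-1}$, and let $g=\gcd(A,B)$. Dividing by $g^k$ gives \eqref{eq:2.3} with coprime $A/g,\,B/g$. Since $k$-th roots are unique in positive rationals, uniqueness of $(a,b)$ forces $A=ga$ and $B=gb$.

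Necessity of \eqref{eq:2.4} follows at once. If $p\in S$ and $p\mid b$, then $p\mid B$, so $p\mid n$, contradicting $p\notin n$. The case $p\in T$, $p\mid a$ is symmetric.

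Sufficiency and the formula \eqref{eq:2.6} go as follows. Define $m,n$ by \eqref{eq:2.6}. The sets $S,T,R$ are pairwise disjoint, so both products are squarefree. Next we identify the prime supports:
\begin{itemize}
\item Primes of $a$ lie in $S\cup R$, because by \eqref{eq:2.4} they are not in $T$, and otherwise they lie in $R$ by \eqref{eq:2.5}.
\item Likewise, primes of $b$ lie in $T\cup R$.
\end{itemize}
Hence $\mathrm{rad}(m)=\prod_{S\sqcup R}p$ and $\mathrm{rad}(n)=\prod_{T\sqcup R}p$. This gives $m/\mathrm{rad}(m)=a$ and $n/\mathrm{rad}(n)=b$. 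The primes dividing $m$ but not $n$ are exactly $S$, and symmetrically for $T$; here we use $S\cap T=\varnothing$. Therefore $J_k(m)=J_k(n)$ reduces, after cancelling the common $R$-factors, exactly to \eqref{eq:2.3}. Finally $m\neq n$, since $S\cup T\neq\varnothing$. So $\{m,n\}$ is a solution with sets $S,T$.

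For primitivity and uniqueness, take an arbitrary solution $\{m',n'\}$ with sets $S,T$. From the computation above, $m'/\mathrm{rad}(m')=ga$ and $n'/\mathrm{rad}(n')=gb$. The set $U'$ of common primes contains every prime of $ab$ outside $S\cup T$, that is, it contains $R$. Every prime of $U'\setminus R$ and every prime of $g$ contributes extra structure. The key claim is that $m'=rm$ and $n'=rn$, where
\[
r=g\prod_{p\in U'\setminus R}p.
\]

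Step one is to check the radicals. We have $\mathrm{rad}(m')=\prod_{S\sqcup U'}p$. We also have $\mathrm{rad}(m)=\prod_{S\sqcup R}p$ with $R\subseteq U'$. Primes of $g$ divide both $A$ and $B$, hence lie in $U'$.

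Step two is the bookkeeping check: $m'=ga\cdot\mathrm{rad}(m')$ and $rm=g\prod_{U'\setminus R}p\cdot a\prod_{S\sqcup R}p$, and these agree. The same holds for $n'$. So every solution with sets $S,T$ is a multiple $\{rm,rn\}$ of this one.

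Step three handles primitivity of $\{m,n\}$ itself. Suppose $\{m/r,n/r\}$ is a solution for some $r>1$. Its sets are still $S,T$, because dividing by a common factor cannot create or destroy a prime that divides only one of the two. By the claim, it is then a multiple of $\{m,n\}$, which is impossible. Uniqueness of the primitive solution follows the same way.

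The main obstacle is this bookkeeping in the claim, namely verifying that $g$ and the extra common primes factor out uniformly from both $m'$ and $n'$.
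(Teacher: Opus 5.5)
Your necessity, sufficiency and uniqueness arguments are correct and follow the paper. Your $g$ is the paper's $d$, and your claim $m'=rm$, $n'=rn$ is exactly the paper's uniqueness step. The gap is in Step three, the primitivity of $\{m,n\}$.

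\textbf{Where Step three fails.} You assert that $\{m/r,n/r\}$ still has associated sets $S,T$. This is false, and it fails for every admissible $r>1$. Such an $r$ divides $\gcd(m,n)=\prod_{p\in R}p$. Every $p\in R$ divides $ab$, hence exactly one of the coprime integers $a,b$. If $p\mid r$ and $p\mid a$, then $v_p(m)=v_p(a)+1\geq 2$ while $v_p(n)=1$. So $p$ divides $m/r$ but not $n/r$: dividing by a common factor does create one-sided primes. The sets attached to $\{m/r,n/r\}$ are therefore $S\sqcup\{p:p\mid\alpha\}$ and $T\sqcup\{p:p\mid\beta\}$, where $\alpha=\gcd(r,a)$ and $\beta=\gcd(r,b)$. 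Your structural claim only describes solutions whose sets are exactly $S,T$. It says nothing about $\{m/r,n/r\}$, and no contradiction results.

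\textbf{The missing ingredient.} You need a direct computation of how $J_k$ changes under division by $r$. Since $r$ is squarefree and each of its primes divides exactly one of $a,b$, you have $r=\alpha\beta$ with $\gcd(\alpha,\beta)=1$. Primes of $\alpha$ survive in $m/r$ but disappear from $n/r$, and vice versa for $\beta$. So \eqref{eq:1.1} gives $J_k(m)=\alpha^kJ_k(\beta)J_k(m/r)$ and $J_k(n)=\beta^kJ_k(\alpha)J_k(n/r)$. If $J_k(m/r)=J_k(n/r)$, then $\alpha^kJ_k(\beta)=\beta^kJ_k(\alpha)$. Coprimality then gives $\alpha^k\mid J_k(\alpha)$ and $\beta^k\mid J_k(\beta)$. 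Since $0<J_k(x)<x^k$ for $x>1$, this forces $\alpha=\beta=1$, i.e.\ $r=1$. This is the paper's primitivity argument.

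With it in place, your uniqueness step completes the proof. Any primitive solution with sets $S,T$ equals $\{rm,rn\}$, and $r>1$ would contradict its primitivity because $\{m,n\}$ is itself a solution. Note also that your closing remark misplaces the difficulty: the bookkeeping in the claim is routine, and primitivity is where the real content lies.
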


\begin{proof}
Suppose first that $\{m',n'\}$ is a solution with associated sets
$S,T$. By \eqref{eq:1.1} and \eqref{eq:2.1}, we have
\[
\left(\frac{m'}{\text{rad}(m')}\right)^{k}\prod_{p\in S}(p^{k}-1)=\left(\frac{n'}{\text{rad}(n')}\right)^{k}\prod_{p\in T}(p^{k}-1).
\]
Comparison with \eqref{eq:2.3} gives
\begin{equation}
m'=\text{rad}(m')ad\qquad\text{and}\qquad n'=\text{rad}(n')bd\label{eq:2.7}
\end{equation}
for some positive integer $d$. By \eqref{eq:2.1}, if $p\in S$,
then $p\nmid n'$, so $p\nmid b$; similarly, if $p\in T$, then $p\nmid m'$,
so $p\nmid a$. Thus \eqref{eq:2.4} is necessary.

Conversely, assume \eqref{eq:2.4} and define $m,n$ by \eqref{eq:2.6}.
Then
\[
\text{rad}(m)=\prod_{p\in S\sqcup R}p\qquad\text{and}\qquad\text{rad}(n)=\prod_{p\in T\sqcup R}p.
\]
Therefore, by \eqref{eq:1.1} and \eqref{eq:2.3},
\[
J_{k}(m)=a^{k}\prod_{p\in S\sqcup R}(p^{k}-1)=b^{k}\prod_{p\in T\sqcup R}(p^{k}-1)=J_{k}(n),
\]
and the associated sets are exactly $S,T$.

Now we prove primitiveness. We first note that
\[
\gcd(m,n)=\prod_{p\in R}p.
\]
Suppose that a positive integer $r$ divides $\gcd(m,n)$ and $J_{k}(m/r)=J_{k}(n/r)$.
Let
\[
\alpha=\gcd(r,a)\qquad\text{and}\qquad\beta=\gcd(r,b).
\]
Since $r$ is squarefree, and each prime factor of $r$ divides exactly
one of $\alpha,\beta$, we have $r=\alpha\beta$ and $\gcd(\alpha,\beta)=1$.
By \eqref{eq:2.6}, if $p\mid\alpha$, then $v_{p}(m)\geq2$ and $v_{p}(n)=1$;
similarly, if $p\mid\beta$, then $v_{p}(n)\geq2$ and $v_{p}(m)=1$.
By \eqref{eq:1.1}, we have
\[
J_{k}(m)=\alpha^{k}J_{k}(\beta)J_{k}(m/r)\qquad\text{and}\qquad J_{k}(n)=\beta^{k}J_{k}(\alpha)J_{k}(n/r).
\]
The assumed equalities imply
\[
\alpha^{k}J_{k}(\beta)=\frac{J_{k}(m)}{J_{k}(m/r)}=\frac{J_{k}(n)}{J_{k}(n/r)}=\beta^{k}J_{k}(\alpha).
\]
Since $\gcd(\alpha,\beta)=1$, we have
\begin{align*}
\alpha^{k}\mid J_{k}(\alpha) & \Rightarrow J_{k}(\alpha)\geq\alpha^{k}\Rightarrow\alpha=1,\\
\beta^{k}\mid J_{k}(\beta) & \Rightarrow J_{k}(\beta)\geq\beta^{k}\Rightarrow\beta=1.
\end{align*}
Therefore, $r=\alpha\beta=1$, and $\{m,n\}$ is primitive.

It remains to prove uniqueness. Let $\{m',n'\}$ be a solution with
associated sets $S,T$ and let $R'=\{p:p\mid\gcd(m',n')\}$. Then
\begin{equation}
\text{rad}(m')=\prod_{p\in S\sqcup R'}p\qquad\text{and}\qquad\text{rad}(n')=\prod_{p\in T\sqcup R'}p.\label{eq:2.8}
\end{equation}
We have $R\subseteq R'$. Indeed, if $p\in R$, then $p\mid ab$ by
\eqref{eq:2.5}, so $p\mid m'n'$ by \eqref{eq:2.7}. Since $p\notin S\cup T$
by \eqref{eq:2.5}, we have $p\in R'$. By \eqref{eq:2.6}, \eqref{eq:2.7},
and \eqref{eq:2.8}, we have
\[
m'=m\left(d\prod_{p\in R'\backslash R}p\right)\qquad\text{and}\qquad n'=n\left(d\prod_{p\in R'\backslash R}p\right).
\]
Therefore, if $\{m',n'\}$ is primitive, then $\{m',n'\}=\{m,n\}$.
\end{proof}
\begin{lyxalgorithm}
\label{alg:2.4} Let $k\geq1$, let $C$ be a positive integer, let
$\mathcal{A}$ be the stabilized set given by Algorithm \ref{alg:2.2},
let
\[
\mathcal{B}=\{\ell:\ell\mid p^{k}-1,p\in\mathcal{A}\},
\]
and let $M$ be the $\#\mathcal{B}\times\#\mathcal{A}$ matrix given
by
\[
M=\left(v_{\ell}(p^{k}-1)\right)_{\ell\in\mathcal{B},p\in\mathcal{A}}.
\]
If a nonzero vector $\boldsymbol{x}=(x_{p})_{p\in\mathcal{A}}\in\{-1,0,1\}^{\mathcal{A}}$
satisfies the linear equation
\begin{equation}
M\boldsymbol{x}\equiv\boldsymbol{0}\bmod k,\label{eq:2.9}
\end{equation}
then we define
\[
S=\{p:x_{p}=1\}\qquad\text{and}\qquad T=\{p:x_{p}=-1\}.
\]
By this construction, $S,T$ satisfy \eqref{eq:2.2}. Define $a,b$
by \eqref{eq:2.3}. If $S,T$ satisfy \eqref{eq:2.4}, then we define
$R\subseteq\mathcal{B}$ by \eqref{eq:2.5} and define $m,n$ by \eqref{eq:2.6}.
If $r\leq C$ for every $r\in R$, then $\{m,n\}$ is a primitive
solution of \eqref{eq:1.2} with $P^{+}(mn)\leq C$.
\end{lyxalgorithm}

\section{\label{sec:3} The cases $k=1,2$}

For $k=1,2$, the existence of infinitely many primitive solutions
of \eqref{eq:1.2} follows from earlier work; see, for example, \cite{MR1715326}
for $k=1$, and \cite{MR2075645,MR2999152,MR3034320} for $k=2$.
We include a proof using the results of Section \ref{sec:2} to make
the primitiveness explicit and to illustrate the difficulties in extending
the argument to $k\geq3$.
\begin{thm}
[{\cite[Theorem 1.2]{MR1830570}}] \label{thm:3.1} Let $F$ be
a nonconstant polynomial with integer coefficients, let $d$ be the
largest degree of its irreducible factors, and let $r$ be the number
of distinct irreducible factors of degree $d$. Assume that $F(0)\neq0$
if $d=1$. Then for every $\epsilon>0$, the estimate
\[
\#\{p\leq x:P^{+}(F(p))\leq y\}\asymp\frac{x}{\log x}
\]
holds for all sufficiently large $x$, provided $y\geq x^{d-1/(2r)+\epsilon}$.
\end{thm}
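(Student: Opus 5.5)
Since this result is quoted from \cite{MR1830570}, what follows is only the route I would take to reprove it. The upper bound $\ll x/\log x$ is trivial from the prime number theorem, so everything rests on the lower bound. The plan is to show that the set of primes $p\le x$ for which $F(p)$ has a prime factor $q>y$ has size at most $(1-c)\pi(x)$ for some $c>0$.

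\textbf{Reduction to the top-degree factors.} Write $F=c_0\prod_i F_i^{e_i}$ with the $F_i$ irreducible and distinct, and take $x$ large.
\begin{itemize}
\item If $\deg F_i\le d-1$, then $|F_i(p)|\ll x^{d-1}<y$, so such factors can never produce a prime factor exceeding $y$.
\item When $d=1$ the hypothesis $F(0)\neq0$ rules out a factor $F_i=\pm X$, so no $F_i(p)$ vanishes and every value $F_i(p)$ has a well-defined largest prime factor. (For $d\ge2$, the degree-one factors are already handled by the previous point.)
\end{itemize}
So only the $r$ factors $F_1,\dots,F_r$ of degree $d$ matter. If $q>y$ and $q\mid F_i(p)$, then $F_i(p)=qm$ with $m\ll x^{d}/y=x^{1/(2r)-\epsilon}$. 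Set $\rho_i(m)=\#\{t\bmod m:F_i(t)\equiv0\}$. Then the bad primes are counted, up to multiplicity, by
\[
\sum_{i=1}^{r}\ \sum_{m\le x^{1/(2r)-\epsilon}}\ \#\{p\le x:\ p\equiv t\bmod m \text{ for some root }t\text{ of }F_i\bmod m,\ F_i(p)/m\text{ prime}\}.
\]

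\textbf{Upper-bound sieve for each $m$.} For fixed $i$ and $m$, I would apply an upper-bound sieve (Selberg's or Rosser's) of dimension $2$ to the integers $n\le x$ in the $\rho_i(m)$ residue classes mod $m$. The sieve requires both $n$ and $F_i(n)/m$ to be free of small primes.
\begin{itemize}
\item The level of distribution comes from Bombieri--Vinogradov: the combined moduli $m\cdot(\text{sieve modulus})$ may run up to $x^{1/2-\epsilon}$.
\item Because $m\le x^{1/(2r)}$ is small, almost all of this level is left for the sieve. This is exactly where the exponent $1/(2r)$ in the hypothesis is used.
\item The sieve bound has the shape $\ll\rho_i(m)\,\frac{x}{\phi(m)\log x\,\log(x^d/m)}$ times a singular series.
\end{itemize}
Summing over $m$ with the prime ideal theorem for $\mathbb{Q}[X]/(F_i)$, which gives $\sum_{m\le z}\rho_i(m)/m\sim c_i\log z$ in the relevant weighted sense, I expect each $i$ to contribute roughly $\pi(x)\cdot\kappa\cdot\log\frac{d}{d-1/(2r)}$. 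Here $\kappa$ is the sieve constant, and the logarithm is heuristically $\sum_{y<q\le x^d}\rho(q)/q\approx\frac{1}{2rd}$. Summing over the $r$ factors then gives about $\kappa/(2d)$.

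\textbf{Main obstacle.} The hard step is making the constants close. The final bound must be strictly less than $\pi(x)$ (uniformly, not just $\ll\pi(x)$) after summing over all $r$ factors, which is why the threshold has the precise form $d-1/(2r)$. Crude dimension-$2$ sieve constants (a factor of $4$ or $8$) could wreck this.

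Two refinements are available if the crude constants fail:
\begin{itemize}
\item Write $F_i(p)=qm$ and count pairs $(p,q)$ by a switching argument, sieving only the prime $q$ with $p$ prime, so the sieve is effectively one-dimensional. Brun--Titchmarsh then gives a factor of $2$ that the $\tfrac1{2r}$ margin should absorb.
\item Use an $\epsilon$-loss in $y$ to push the relevant sum over $m$ into a range where the asymptotic, rather than upper-bound, Bombieri--Vinogradov count applies.
\end{itemize}
Once the union of bad primes is shown to have size at most $(1-c)\pi(x)$, the remaining $\ge c\,\pi(x)$ primes have $P^{+}(F(p))\le y$, which proves the lower bound.
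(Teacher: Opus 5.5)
The paper does not prove this theorem; it quotes it from the cited source. Your route has a genuine gap at exactly the step you call the main obstacle, and better constants cannot close it.

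Even heuristically, the union-bound sum you are estimating is about $r\log\frac{d}{d-1/(2r)}\,\pi(x)\ge \pi(x)/(2d)$. So you would need to count primes $p\equiv t\bmod m$ with $F_i(p)/m$ prime to within a factor smaller than $2d$ of the truth. But the numbers $F_i(p)/m$ have size $x^{d}/m$, while Bombieri--Vinogradov only supplies level $D=x^{1/2}/m$. With that level, no upper-bound sieve beats the linear-sieve bound $2X/\log D$ (parity). The true count is $X/\log(x^{d}/m)$, up to the same singular series. For $m=x^{\alpha}$ this is a loss of $2(d-\alpha)/(1/2-\alpha)\ge 4d$.

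Summing over $m$, the factor $d$ cancels and the total bound is roughly $2r\log\frac{r}{r-1}\,\pi(x)>2\pi(x)$ for $r\ge 2$. For $r=1$ it is of order $\log(1/\epsilon)\,\pi(x)$: there $m$ runs up to $x^{1/2-\epsilon}$, and contrary to your remark, almost no level is left for the sieve.

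Your alternatives do not help either:
\begin{enumerate}
\item Sieving $n$ and $F_i(n)/m$ jointly over integers (dimension $2$) gives a worse constant.
\item The Brun--Titchmarsh factor $2$ in your first refinement is relative to $\log D$, not to $\log(x^{d}/m)$.
\item Bombieri--Vinogradov gives no asymptotic for the primality of $F_i(p)/m$, which is a Bateman--Horn-type problem, so your second refinement also fails.
\end{enumerate}

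The missing idea is to prove the lower bound directly instead of bounding the complement. Fix $0<\delta<\epsilon$ and put $I=[x^{1/(2r)-\delta},x^{1/(2r)-\delta/2}]$. Let $\rho_i^{*}(\ell)$ be the number of roots of $F_i$ modulo $\ell$ that are prime to $\ell$, and consider
\[
W=\sum_{p\le x}\ \prod_{i=1}^{r}\#\{\ell\in I:\ \ell\mid F_i(p)\}.
\]
Expanding, $W$ counts primes in $\prod_i\rho_i^{*}(\ell_i)$ residue classes modulo $\ell_1\cdots\ell_r\le x^{1/2-r\delta/2}$. So Bombieri--Vinogradov gives main term $\pi(x)\prod_i\sum_{\ell\in I}\rho_i^{*}(\ell)/(\ell-1)$, and the prime ideal theorem gives $W\gg_{\delta}\pi(x)$. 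The hypothesis $F(0)\neq0$ when $d=1$ is exactly what keeps $\rho_i^{*}$ from vanishing identically.

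Each $F_i(p)$ has $O(1)$ prime factors in $I$, so $\gg\pi(x)$ primes $p$ have every $F_i(p)$ divisible by some $\ell_i\in I$. For these $p$, $\ell_i\le y$ and $|F_i(p)|/\ell_i\ll x^{d-1/(2r)+\delta}\le y$, so $F_i(p)$ is $y$-smooth. The lower-degree factors satisfy $|F_j(p)|\ll x^{d-1}\le y$ automatically.

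This is where the exponent $d-1/(2r)$ really comes from: $r$ forced divisors, each of size $x^{1/(2r)}$, whose product sits at the Bombieri--Vinogradov level $x^{1/2}$.
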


\begin{cor}
\label{cor:3.2} When $k=1,2$, there are infinitely many primitive
solutions of \eqref{eq:1.2}.
\end{cor}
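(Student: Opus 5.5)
We plan to use Theorem \ref{thm:3.1} to produce, for every large $x$, many primes $p\le x$ with $P^{+}(p^{k}-1)\le y$, where $y$ is a small power of $x$. Then a linear-algebra (pigeonhole) argument over $\mathbb{F}_2$ will give sets $S,T$ satisfying \eqref{eq:2.2}. Lemma \ref{lem:2.3} will turn each such pair into a primitive solution, after we also check \eqref{eq:2.4}. Finally, we will let $x\to\infty$ to get infinitely many distinct solutions.

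For $k=1$, take $F(t)=t-1$. Here $d=1$, $r=1$ and $F(0)=-1\neq0$, so we may take any $y\ge x^{1/2+\epsilon}$. For $k=2$, take $F(t)=t^{2}-1=(t-1)(t+1)$. Here $d=1$, $r=2$ and $F(0)\neq0$, so $y\ge x^{3/4+\epsilon}$ is allowed. In both cases Theorem \ref{thm:3.1} gives $\gg x/\log x$ primes $p\le x$ with $P^{+}(p^{k}-1)\le y$.

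Let $\mathcal{P}$ be this set of primes, and for $p\in\mathcal{P}$ consider the exponent vector of $p^{k}-1$ over the primes $\le y$, reduced mod $k$. For $k=1$, condition \eqref{eq:2.2} is automatic for any nonempty $S$ with $T=\varnothing$. That case is essentially trivial, but we still want to control \eqref{eq:2.4}, so we treat $k=1$ and $k=2$ uniformly.

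For $k=2$, the vectors live in $\mathbb{F}_2^{\pi(y)}$, and $\pi(y)\approx x^{3/4+\epsilon}/\log x$ is much smaller than $\#\mathcal{P}\gg x/\log x$. Hence there are many linear dependencies. Each dependency gives a nonempty set $U\subseteq\mathcal{P}$ with $\prod_{p\in U}(p^{2}-1)$ a square. We then split $U=S\sqcup T$ arbitrarily, for instance $T=\varnothing$. This is legitimate because \eqref{eq:2.2} only involves the quotient, and squares are insensitive to moving factors between numerator and denominator when $k=2$. For $k=1$, any single $p$ works.

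Next we must verify \eqref{eq:2.4}, which asks that no prime of $S$ divide $b$ and no prime of $T$ divide $a$. With $T=\varnothing$ we have $b=1$ when $k=2$ and the product is a square: then $a^{2}\prod_{p\in S}(p^{2}-1)=b^{2}$ forces $a=1$. Concretely, $b=\sqrt{\prod(p^2-1)}$, and \eqref{eq:2.4} requires $p\nmid b$ for $p\in S$. This holds because the prime factors of $p^{2}-1$ for $p\in\mathcal{P}$ are at most $y<$ every prime in $S$, once we restrict $\mathcal{P}$ to primes in $(y,x]$. That restriction costs nothing, since there are still $\gg x/\log x$ such primes for $y=x^{3/4+\epsilon}$.

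For $k=1$, the case $S=\{p\}$, $T=\varnothing$ gives $a(p-1)=b$ with $a=1$, $b=p-1$, and $p\nmid p-1$. This yields the solution $\{p\cdot\mathrm{rad}(p-1),\,(p-1)\mathrm{rad}(p-1)\}$, up to the construction \eqref{eq:2.6}; for example, $p=3$ gives $\{3\cdot2,2\cdot2\}$. Here Theorem \ref{thm:3.1} is not even needed, but we can use the same framework. Distinct $S$ give distinct primitive solutions by the uniqueness part of Lemma \ref{lem:2.3}: the associated sets determine the solution, and different $S$ give different associated sets. So infinitely many choices of $S$ produce infinitely many primitive solutions.

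The main obstacle is making the dependency count for $k=2$ rigorous. We need the number of usable primes to exceed the dimension $\pi(y)$ while also having every $p\in\mathcal{P}$ exceed $y$, which is what \eqref{eq:2.4} requires. This works precisely because the exponent $3/4<1$ comes from $r=2$ in Theorem \ref{thm:3.1}. The same comparison explains why the method fails for $k\ge3$: there $p^{k}-1$ has an irreducible factor of degree at least $2$, so the admissible $y$ exceeds $x$.
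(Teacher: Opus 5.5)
Your proposal is correct and follows essentially the same route as the paper: $S=\{p\}$, $T=\varnothing$ for $k=1$, and for $k=2$ Theorem \ref{thm:3.1} with $y=x^{3/4+\epsilon}$, primes restricted to $(y,x]$ so that \eqref{eq:2.4} holds, and an $\mathbb{F}_2$ dependency giving $\prod_{p\in S}(p^{2}-1)=b^{2}$ with $a=1$. There are two small slips. You first write ``$b=1$'' where you mean $a=1$. Also, the fact that distinct $S$ give distinct solutions follows from \eqref{eq:2.1}, since a solution determines its associated sets, and not from the uniqueness part of Lemma \ref{lem:2.3}; the paper instead gets distinctness from $P^{+}(mn)>y\to\infty$.
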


\begin{proof}
For $k=1$, take any $p$ and let
\[
S=\{p\},T=\varnothing,a=1,b=p-1.
\]
Then Lemma \ref{lem:2.3} gives a primitive solution with $P^{+}(mn)=p$.
In particular, distinct values of $p$ give distinct solutions.

For $k=2$, take $F(X)=X^{2}-1$. Then $d-1/(2r)=3/4$. Fix $0<\epsilon<1/4$
and let $y=x^{3/4+\epsilon}$. Let
\[
\mathcal{A}=\{p:y<p\leq x,P^{+}(p^{2}-1)\leq y\}\qquad\text{and}\qquad\mathcal{B}=\{\ell:\ell\leq y\}.
\]
Since $3/4+\epsilon<1$, we have $\#\mathcal{B}\leq y=o(x/\log x)$.
Then Theorem \ref{thm:3.1} implies that $\#\mathcal{A}>\#\mathcal{B}$
for all sufficiently large $x$. Thus the vectors
\[
(v_{\ell}(p^{2}-1)\bmod2)_{\ell\in\mathcal{B}}\text{ for }p\in\mathcal{A}
\]
are linearly dependent. Therefore, there is a nonempty set $S\subseteq\mathcal{A}$
such that
\[
\prod_{p\in S}(p^{2}-1)=b^{2}
\]
for some positive integer $b$. This is the same as \eqref{eq:2.3}
with $T=\varnothing$ and $a=1$. Since $\mathcal{A}\cap\mathcal{B}=\varnothing$,
the condition \eqref{eq:2.4} is satisfied. Then Lemma \ref{lem:2.3}
gives a primitive solution with $P^{+}(mn)>y$. By letting $x\to\infty$,
we obtain infinitely many distinct solutions.
\end{proof}
The key point in the proof of Corollary \ref{cor:3.2} for $k=2$
is $d-1/(2r)<1$. For $k\geq3$, the polynomial $X^{k}-1$ satisfies
$d-1/(2r)>1$, so Theorem \ref{thm:3.1} does not provide the smoothness
estimate needed to extend the argument. We also note that for $k=3,4,6$,
Lemmas \ref{lem:2.1} and \ref{lem:2.3} imply that if \eqref{eq:1.2}
has infinitely many primitive solutions, then there are infinitely
many primes $p$ such that $P^{+}(p^{k}-1)<2p$. To our knowledge,
this latter infinitude remains unproved.

\section{\label{sec:4} Proof of Theorem \ref{thm:1.1}}
\begin{proof}
[Proof of Theorem \ref{thm:1.1}] We use Algorithm \ref{alg:2.4}
to prove parts (1), (2), (3). By a solution of \eqref{eq:2.9}, we
mean a vector $\boldsymbol{x}\in\{-1,0,1\}^{\mathcal{A}}$ satisfying
\eqref{eq:2.9}.

(1) Let $k=3$ and $C=163$. Then
\begin{align*}
\mathcal{A} & =\{2,3,7,29,37,163\},\\
\mathcal{B} & =\{2,3,7,13,19,67\}.
\end{align*}
Since $\mathbb{Z}/3\mathbb{Z}$ is a field, we solve \eqref{eq:2.9}
by standard linear algebra. Up to sign, \eqref{eq:2.9} has one nonzero
solution, which yields the unique primitive solution of \eqref{eq:1.2}
with $P^{+}(mn)\leq C$.

(2) Let $k=4$ and $C=701$. Then
\begin{align*}
\mathcal{A} & =\{2,3,5,7,13,17,23,31,41,43,73,83,193,239,307,463,701\},\\
\mathcal{B} & =\{2,3,5,7,11,13,17,29,37,41,53,97,149\}.
\end{align*}
Since $\mathbb{Z}/4\mathbb{Z}$ is not a field, we first solve $M\boldsymbol{y}\equiv\boldsymbol{0}\bmod2$,
where $\boldsymbol{y}\in\{0,1\}^{\mathcal{A}}$, and then for each
solution $\boldsymbol{y}$, we decide whether it can be lifted to
a mod $4$ solution. Let $I=\{p\in\mathcal{A}:y_{p}=1\}$ be the support
of $\boldsymbol{y}$ and let
\[
N=(M_{\ell,p})_{\ell\in\mathcal{B},p\in I}
\]
be the corresponding submatrix of $M$. Suppose that $\boldsymbol{x}$
is a lift of $\boldsymbol{y}$. Then $N\boldsymbol{x}_{I}\equiv\boldsymbol{0}\bmod4$.
Let $\boldsymbol{z}=(\boldsymbol{x}_{I}+\boldsymbol{1})/2$. Then
$\boldsymbol{z}\in\{0,1\}^{I}$ and
\[
N\boldsymbol{z}\equiv\frac{N\boldsymbol{1}}{2}\bmod2.
\]
Note that $N\boldsymbol{1}$ has even entries, so the right-hand side
is well-defined. From each solution $\boldsymbol{z}$, we recover
$\boldsymbol{x}$ by
\[
\boldsymbol{x}_{I}=2\boldsymbol{z}-\boldsymbol{1}\qquad\text{and}\qquad x_{p}=0\text{ for }p\notin I.
\]
Up to sign, \eqref{eq:2.9} has four nonzero solutions. Two of them
satisfy \eqref{eq:2.4} and yield the only two primitive solutions
of \eqref{eq:1.2} with $P^{+}(mn)\leq C$.

(3) Let $k=6$ and $C=7.4\cdot10^{6}$. Then $\#\mathcal{A}=3460$
and $\#\mathcal{B}=3443$. We check that
\[
\dim\ker_{\mathbb{Z}/2\mathbb{Z}}M=\dim\ker_{\mathbb{Z}/3\mathbb{Z}}M=17.
\]
The equation $M\boldsymbol{y}\equiv\boldsymbol{0}\bmod2$ has $2^{17}-1$
nonzero solutions. For each solution $\boldsymbol{y}$, we decide
whether it can be lifted to a mod $6$ solution. Let $\{\boldsymbol{z}_{i}\}_{i=1}^{17}$
be a basis of $\ker_{\mathbb{Z}/3\mathbb{Z}}M$ and let $I=\{p\in\mathcal{A}:y_{p}=0\}$.
Suppose that $\boldsymbol{x}$ is a lift of $\boldsymbol{y}$. Then
there is a nonzero vector $(c_{i})_{i=1}^{17}$ such that
\[
\bar{\boldsymbol{x}}=\boldsymbol{x}\bmod3=\sum_{i=1}^{17}c_{i}\boldsymbol{z}_{i}\qquad\text{and}\qquad\boldsymbol{0}=\bar{\boldsymbol{x}}_{I}=\sum_{i=1}^{17}c_{i}(\boldsymbol{z}_{i})_{I},
\]
which implies that $\{(\boldsymbol{z}_{i})_{I}\}_{i=1}^{17}$ is linearly
dependent. However, we check that no nonzero solution $\boldsymbol{y}$
satisfies this necessary condition. Therefore, \eqref{eq:2.9} has
no nonzero solution, and any solution of \eqref{eq:1.2} must satisfy
$P^{+}(mn)>C$.

(4) Let $k=6$ and $C=2.65\cdot10^{9}$. Then
\[
\#\mathcal{A}=3080877\qquad\text{and}\qquad\#\mathcal{B}=1726159.
\]
For this existence proof, we need a variant of Algorithm \ref{alg:2.2}.
Let $\mathcal{C}_{0}=\mathcal{A}\backslash\mathcal{B}$. Then we do
the following operation recursively: let
\begin{align*}
\mathcal{C}_{n+1}= & \;\mathcal{C}_{n}\backslash\{p\in\mathcal{C}_{n}:\text{there exists }\ell\text{ such that }v_{\ell}(p^{6}-1)>0,\\
 & \;6\nmid v_{\ell}(p^{6}-1),\text{and }v_{\ell}(q^{6}-1)=0\text{ for any }q\in\mathcal{C}_{n}\backslash\{p\}\}.
\end{align*}
The sequence $\mathcal{C}_{n}$ will stabilize to a set $\mathcal{C}$
after finitely many steps. Let
\[
\mathcal{D}=\{\ell:\ell\mid p^{6}-1,p\in\mathcal{C}\}.
\]
Then
\begin{equation}
\#\mathcal{C}=2752446\qquad\text{and}\qquad\#\mathcal{D}=1610722.\label{eq:4.1}
\end{equation}
For each $\ell\in\mathcal{D}$, let
\[
\mathcal{C}_{\ell}=\{p\in\mathcal{C}:\ell\mid p^{6}-1\}.
\]
We check that $\#\mathcal{C}_{\ell}\geq2$ for every $\ell\in\mathcal{D}$.
Let
\begin{align*}
\mathcal{D}_{i} & =\{\ell\in\mathcal{D}:\#\mathcal{C}_{\ell}=i\}\text{ for }i=2,3,4,\\
\mathcal{D}_{5} & =\{\ell\in\mathcal{D}:\#\mathcal{C}_{\ell}\geq5\}.
\end{align*}
Then
\begin{equation}
\#\mathcal{D}_{2}=767828,\#\mathcal{D}_{3}=271105,\#\mathcal{D}_{4}=138124,\#\mathcal{D}_{5}=433665.\label{eq:4.2}
\end{equation}
Let $G$ be the graph with vertex set $\mathcal{C}$ and edge set
$\mathcal{E}=\{\mathcal{C}_{\ell}:\ell\in\mathcal{D}_{2}\}$. Let
$H$ be a connected component of $G$ with $V(H)$ vertices and $E(H)$
edges. Then
\[
E(H)\geq\begin{cases}
V(H)-1, & \text{if }H\text{ is bipartite},\\
V(H), & \text{if }H\text{ is non-bipartite}.
\end{cases}
\]
The second inequality is because any non-bipartite component must
contain a cycle. Summing over $H$ gives
\[
\#\mathcal{C}-r\leq\#\mathcal{E}\leq\#\mathcal{D}_{2}\qquad\text{and}\qquad r\geq\#\mathcal{C}-\#\mathcal{D}_{2},
\]
where
\[
r=\#\{\text{bipartite connected components of }G\}.
\]
Let $\mathcal{F}$ consist of subsets $U\subseteq\mathcal{C}$ obtained
as follows:
\begin{itemize}
\item choose exactly one color class from every bipartite component;
\item choose no vertices from every non-bipartite component.
\end{itemize}
Thus
\begin{equation}
\#\mathcal{F}=2^{r}\geq2^{\#\mathcal{C}-\#\mathcal{D}_{2}}.\label{eq:4.3}
\end{equation}
Consider the map
\begin{align*}
\Phi:\mathcal{F} & \to(\mathbb{Z}/6\mathbb{Z})^{\mathcal{D}},\\
U & \mapsto\left({\textstyle \sum_{p\in U}}v_{\ell}(p^{6}-1)\bmod6\right)_{\ell\in\mathcal{D}}.
\end{align*}
We check that $v_{\ell}(p^{6}-1)=1$ for every $\ell\in\mathcal{D}_{2}\cup\mathcal{D}_{3}\cup\mathcal{D}_{4}$
and every $p\in\mathcal{C}_{\ell}$. As a result, we have
\[
\Phi(U)_{\ell}\in\begin{cases}
\{0,1,2,3\}, & \text{if }\ell\in\mathcal{D}_{3},\\
\{0,1,2,3,4\}, & \text{if }\ell\in\mathcal{D}_{4},\\
\{0,1,2,3,4,5\}, & \text{if }\ell\in\mathcal{D}_{5},
\end{cases}
\]
and
\[
\Phi(U)_{\ell}=\begin{cases}
1, & \text{if }\ell\in\mathcal{D}_{2}\text{ and }\text{Comp}(\mathcal{C}_{\ell})\text{ is bipartite},\\
0, & \text{if }\ell\in\mathcal{D}_{2}\text{ and }\text{Comp}(\mathcal{C}_{\ell})\text{ is non-bipartite},
\end{cases}
\]
where $\text{Comp}(\mathcal{C}_{\ell})$ is the component containing
$\mathcal{C}_{\ell}$. In particular, if $\ell\in\mathcal{D}_{2}$,
then the value of $\Phi(U)_{\ell}$ does not depend on $U$. Therefore,
we have
\begin{equation}
\#\Phi(\mathcal{F})\leq4^{\#\mathcal{D}_{3}}5^{\#\mathcal{D}_{4}}6^{\#\mathcal{D}_{5}}.\label{eq:4.4}
\end{equation}
Substituting \eqref{eq:4.1} and \eqref{eq:4.2} into \eqref{eq:4.3}
and \eqref{eq:4.4}, we find that
\begin{equation}
\#\mathcal{F}\geq2^{\#\mathcal{C}-\#\mathcal{D}_{2}}>4^{\#\mathcal{D}_{3}}5^{\#\mathcal{D}_{4}}6^{\#\mathcal{D}_{5}}\geq\#\Phi(\mathcal{F}).\label{eq:4.5}
\end{equation}
By the pigeonhole principle, there are distinct $S',T'\in\mathcal{F}$
such that $\Phi(S')=\Phi(T')$. Then $S=S'\backslash T'$ and $T=T'\backslash S'$
satisfy $S\cap T=\varnothing$, $S\cup T\neq\varnothing$, and \eqref{eq:2.2}.
Define $a,b,R,m,n$ as in Lemma \ref{lem:2.3}. Since $\mathcal{C}\cap\mathcal{D}=\varnothing$,
the condition \eqref{eq:2.4} is satisfied. Therefore, $\{m,n\}$
is a solution of \eqref{eq:1.2}. Finally, we check that $P^{+}(mn)\leq\max(\mathcal{C}\cup\mathcal{D})<5.2\cdot10^{9}$.
\end{proof}
We make some remarks on the computation in the proof of Theorem \ref{thm:1.1}.
For parts (1) and (2), Lemma \ref{lem:2.1} is not really necessary
because $C$ is small. However, for parts (3) and (4), Lemma \ref{lem:2.1}
indeed simplifies the computation greatly. In part (3), we need to
check $2^{\dim\ker_{\mathbb{Z}/2\mathbb{Z}}M}-1$ cases. Computational
evidence shows that $\dim\ker_{\mathbb{Z}/2\mathbb{Z}}M$ increases
quickly as $C$ increases. This is why we are unable to significantly
improve the lower bound. In part (4), we use the inequality \eqref{eq:4.5}
without knowing the structure of $G$. Studying the details of $G$
could improve the upper bound. We do not pursue this here.

\bibliographystyle{amsalpha}
\bibliography{Jordan}

\end{document}